\newtheorem{theorem}{Theorem}[section]
\newtheorem{proposition}[theorem]{Proposition}
\newtheorem{corollary}[theorem]{Corollary}
\newtheorem{definition}[theorem]{Definition}
\newtheorem{remark}[theorem]{Remark}
\title{\LARGE \bf
Gauge Freedom within the Class of Linear Feedback Particle Filters
}
\newcommand{\R}{\mathds{R}}
\newcommand{\Skew}[1]{\text{Skew}(#1)}
\DeclareMathOperator*{\argmin}{arg\,min}
\newcommand{\parenths}[1]{\left( #1 \right)}
\newcommand{\brackets}[1]{\left[ #1 \right]}
\author{Ehsan Abedi, Simone Carlo Surace
\thanks{E. Abedi performed this research during a semester project at University of Bern, Switzerland, while enrolled in the Master Program in Computational Science and Engineering at EPFL, Lausanne, Switzerland, and was supported by the Vahabzadeh Foundation.
        {\tt\small ehsan.abedi@epfl.ch}}%
\thanks{S.C. Surace is with the Department of Physiology, University of Bern, Switzerland, and is supported by the Swiss National Science Foundation, grant PP00P3\_179060.
        {\tt\small surace@pyl.unibe.ch}}%
}
\begin{document}
\bstctlcite{MyBSTcontrol} 

\maketitle
\thispagestyle{empty}
\pagestyle{empty}

\begin{abstract}

Feedback particle filters (FPFs) are Monte-Carlo approximations of the solution of the filtering problem in continuous time.
The samples or particles evolve according to a feedback control law in order to track the posterior distribution.
However, it is known that by itself, the requirement to track the posterior does not lead to a unique algorithm.
Given a particle filter, another one can be constructed by applying a time-dependent transformation of the particles that keeps the posterior distribution invariant.
Here, we characterize this gauge freedom within the class of FPFs for the linear-Gaussian filtering problem, and thereby extend previously known parametrized families of linear FPFs.
\end{abstract}

\section{Introduction}
\label{intro}

The filtering problem is the problem of estimating a quantity evolving in time that is accessible only through partial and noisy observations.
This is commonly formalized as the problem of finding the conditional distribution of the hidden state at time $t$ given all observations up to that time.
Despite its long history and rich developments around its theory (see \cite{Bain2009}, Section 1.3), the principal challenge of implementing filters in practical applications centers around the lack of closed-form solutions and the resulting necessity to find efficient and robust numerical approximations.

The issue has become even more severe in the era of big data where the dimensionality of the processes is very large.
The main approach to approximate the conditional distribution, sequential Monte-Carlo or particle filtering (see \cite{Doucet2011} for a survey and pointers to the literature), is known to exhibit a curse of dimensionality as the number of dimensions of the observations grows \cite{Daum2003}-\nocite{Snyder2008,Rebeschini2015}\cite{Surace2019}.
The problem can be traced down to the use of importance weights and their increasing degeneracy as time progresses (see \cite{Surace2019} and the references therein).

More recently, particle filters without importance weights \cite{Evensen2003}-\nocite{Bergemann2012,Yang2013,Yang2016}\cite{Daum2010} have been gaining attention. 
While lacking the principal vulnerability of weighted particle filters, many theoretical questions remain open \cite{Laugesen2015}-\nocite{Daum2016}\cite{DelMoral2018}. 
One question that has recently received some interest is the non-uniqueness of the law of the process approximating the filtering distribution.
It has been shown \cite{Taghvaei2016}-\cite{Kim2018} that at least in the linear-Gaussian case there are many ways to construct particle dynamics -- both deterministic and stochastic -- that track the exact posterior distribution (given by the Kalman-Bucy filter).

Here, we systematically investigate the degrees of freedom in choosing dynamics of the particles (within some constraints) for the linear-Gaussian filtering problem while keeping their distribution aligned with the exact conditional distribution (assuming that the initial distribution of the filter is Gaussian).
We characterize a group of transformations (which we call gauge transformations, taking inspiration by similar transformations appearing in theoretical physics) that preserve the conditional distribution and describe how it acts on a class of linear feedback particle filters that extends the ones in the literature.
Moreover, we propose a cost function on the family of all such filters.
We identify a known and a new feedback particle filter as a (constrained) optimum of this cost function.

The remainder of the paper is structured as follows: 
in Section~\ref{prelim}, we discuss the filtering problem in general and the linear-Gaussian case that is the focus of this paper in particular. 
We define the notion of particle filter that is the focus of this work and provide an overview of previous work regarding such filters and the history of the non-uniqueness problem.
In Section~\ref{symmetries}, we discuss the symmetries (gauge transformations) of the problem, define certain classes of filters on which the symmetries act, and describe this action in detail.
In Section~\ref{speccase} we show how existing linear feedback particle filters arise as special cases of the class of linear feedback particle filters defined earlier.
In Section~\ref{optimality}, we introduce an optimality criterion and optimize it with and without constraints, obtaining two specific types of feedback particle filters. 
Lastly, in Section~\ref{discussion}, we discuss the implications of our results and comment on future directions.

\section{Preliminaries and background}
\label{prelim}

The classical filtering problem is to find the conditional distribution of the hidden state $X_t$ given observations $\mathscr{F}^Y_t$ for the stochastic system given by
\begin{align}
dX_t&=f(X_t)dt+g(X_t)dW_t, \\
dY_t&=h(X_t)dt+dV_t,
\end{align}
where $X_t$ and $Y_t$ are valued in $\mathds{R}^n$ and $\mathds{R}^m$ respectively, $f,g,h$ are (known) vector- or matrix-valued functions satisfying suitable conditions for the well-posedness of the stochastic differential equations (SDEs), and $W_t$ and $V_t$ are independent $n'$ and $m$-dimensional Brownian motions respectively.
The distribution of $X_0$ is assumed to be known and independent of the Brownian motions.

\subsection{Linear-Gaussian filtering problem}
In this paper, we will focus on the special linear-Gaussian case of the above problem, where $f,g$, and $h$ are chosen such that
\begin{align}
dX_t&=A X_t dt+B dW_t, \\
dY_t&=C X_tdt+dV_t,
\end{align}
for some $A\in\R^{n\times n}$, $B\in\R^{n\times n'}$, and $C\in\R^{m\times n}$, and
$X_0$ has Gaussian distribution. 
This filtering problem has an exact solution, called the Kalman-Bucy filter \cite{Kalman1961}.
The conditional distribution of $X_t$ given $\mathscr{F}^Y_t$ is multivariate Gaussian with mean $\mu_t$ and covariance matrix $P_t$ which jointly evolve as
\begin{align}
d\mu_t&=A\mu_t dt+P_tC^{\top}(dY_t-C\mu_tdt), \\
dP_t&=(BB^{\top}+AP_t+P_tA^{\top}-P_tC^{\top}CP_t)dt,
\end{align}
where $\mu_0$ and $P_0$ are chosen according to the distribution of $X_0$.
For convenience, throughout this article it will be assumed that $P_0$ is strictly positive definite.
Under this assumption, $P_t$ remains strictly positive definite for all $t\geq 0$ (see Proposition 1.1 in \cite{Dieci1994}).

\subsection{Particle filters}
In the context of this paper, a particle filter is any approximation of the conditional distribution of $X_t$ given $\mathscr{F}^Y_t$ by a set of (unweighted) samples or particles $S_t^{(i)}$ for $i=1,...,N$, such that $S_t^{(i)}$ are $\mathscr{F}^{Y,Z}_t$-adapted processes.\footnote{This use of the term `particle filter' differs of the usual one, where the samples are weighted by importance weights.
In this paper, we shall only be concerned with unweighted particle filters, which are also known as interacting particle systems or ensemble Kalman filters \cite{Crisan1999,Evensen2003}.}
Here, $\mathscr{F}^{Y,Z}_t$ is the filtration generated by the process $(Y_t,Z_t)$, where $Z_t$ is some process independent from $\mathscr{F}^{X,Y}_t$ (for example, some additional noise in the particle dynamics).
In the following, we will only consider `symmetric' particle filters for which all particles have the same conditional distribution given $\mathscr{F}^Y_t$.
We will therefore talk about the dynamics of a single representative particle $S_t$, omitting the particle index $i$.
The distribution of $S_t$ will still depend on the number of particles $N$.
Such a particle filter is called asymptotically exact if the distribution of $S_t$ given $\mathscr{F}^Y_t$ converges to the conditional distribution of $X_t$ as $N\to\infty$.

\subsection{Feedback particle filter (FPF)}
\label{poissonsect}
An asymptotically exact filter has been found in \cite{Yang2013} and \cite{Yang2016} based on mean-field optimal control.
It is usually referred to simply as \emph{Feedback Particle Filter}, but in order to distinguish it from similar algorithms, we will refer to this specific algorithm as stochastic feedback particle filter (sFPF). 
Specifically, the sFPF is derived by finding control terms $u$ and $K$ for the particle dynamics\footnote{The FPF is more naturally expressed in Stratonovich form (notation: $\circ$).}
\begin{equation}
dS_t=f(S_t)dt+g(S_t)d\bar W_t+u_t(S_t)dt+K_t(S_t)\circ dY_t,
\label{controllaw}
\end{equation}
such that the particle filter whose particles $S_t^{(i)}$ evolve according to the SDE \eqref{controllaw} is asymptotically exact.
Here, $\bar W_t$ is an $n'$-dimensional Brownian motion that is independent of $\mathscr{F}^{X,Y}_t$ and plays the role of $Z_t$ in the previous paragraph.
The sFPF can be derived by `aligning' the Fokker-Planck equation of the particle filter with the Kushner-Stratonovich filtering equation in an appropriate sense (see \cite{Yang2016}, Section 2), which leads to a McKean-Vlasov or mean-field equation where $u_t$ and $K_t$ depend on the distribution of $S_t$. 
As a result, the FPF is exact in the mean-field sense if initialized with the correct initial distribution.
However, the computation of the gain term $K$ requires the solution of a linear boundary value problem (BVP) at each instant in time, which accounts for the bulk of the computational cost this algorithm.

Specifically, the $j$'th column of the gain matrix $K$ is any vector-valued function that satisfies a weighted Poisson equation
\begin{equation}
\nabla\cdot(p(x)K^{.j}(x)) = - \parenths{h^j(x) - \hat h^j}p(x)
\label{gaineq}
\end{equation}
with suitable boundary conditions, where $p$ is the density of the current particle distribution and $\hat h^j=\int h^j(y)p(y)dy$.
This equation does not have uniqueness of solutions, as any solution $K$ may be perturbed by a divergence-free vector field $K'$, i.e. $\nabla\cdot(pK') = 0$ to produce a new solution $K+K'$.
Uniqueness can be obtained by requiring $K$ to be a gradient\footnote{Under suitable conditions on $p$, this gradient solution can be interpreted as the minimum-energy gain, which is related to the dynamic (Benamou-Brenier) formulation of optimal transport (see Lemma 8.4.2 in \cite{Ambrosio2008}).}, i.e. $K=\nabla\phi$.
Whatever $K$ is chosen, the general solution for $u$ can be written as $u=-\tfrac{1}{2}(h+\hat h)K + u'$, where $u'$ is divergence-free in the sense above.

In the linear-Gaussian case, the gradient-form solution of the BVP \eqref{gaineq} can be obtained in closed form.
It is given by the Kalman gain $K_t\equiv P_tC^{\top}$ (see \cite{Yang2016}, Section 4.2).
By choosing $u_t=-\tfrac{1}{2}(h+\hat h)K_t$, the resulting FPF -- which will henceforth be referred to as the \emph{stochastic linear FPF} (slFPF) -- is equivalent to the square-root form of the Ensemble Kalman-Bucy filter.
It takes the form
\begin{equation}
dS_t=AS_tdt+P_tC^{\top}\parenths{dY_t-\tfrac{1}{2}C(S_t+\mu_t)dt}+Bd\bar W_t.
\label{slFPF}
\end{equation}
If $S_0$ is normally distributed with mean $\mu_0$ and covariance matrix $P_0$, the conditional distribution of $S_t$ given $\mathscr{F}^Y_t$ has mean and covariance $\mu_t$ and $P_t$ respectively for all $t\geq 0$.

\subsection{Non-uniqueness of the control law}
\label{nonunique}
The linear case allows us to explore the non-uniqueness inherent in the choice of $K$ and $u$.
In the linear case, a larger class of solutions of Eq.~\eqref{gaineq} can be obtained by adding a linear divergence-free field to each column of $K$, i.e. setting $K^{.j}_t(x)=P_t(C^{j.}+\Pi^{(j)}_t (x-\mu_t))$, where $\Pi^{(j)}_t$ are skew-symmetric matrices. 
In addition, one may choose $u_t(x)=-\tfrac{1}{2}K_t(x)C(x+\mu_t)+P_t\tilde\Pi_t(x-\mu_t)$, where $\tilde\Pi_t$ is yet another skew-symmetric matrix.

Besides the modification of $K$ and $u$ by divergence-free terms, there are other modifications of the slFPF dynamics \eqref{slFPF} for which the distribution of $S_t$ agrees with the Kalman-Bucy filter.
In \cite{Taghvaei2016} a deterministic linear FPF (deterministic refers to the fact that there is no independent noise term) was derived from an optimal transport perspective.
We will refer to it as \emph{optimal transport deterministic linear feedback particle filter} (OTdetFPF). 
Its dynamics are given by  
\begin{multline}
dS_t=AS_tdt+P_tC^{\top}\parenths{dY_t-\tfrac{1}{2}C(S_t+\mu_t)dt}\\
+\hat\Omega_tP_t^{-1}(S_t-\mu_t)dt+\frac{1}{2}BB^{\top}P_t^{-1}(S_t-\mu_t)dt,
\label{OTdetFPF}
\end{multline}
where $\hat\Omega_t$ is the unique skew-symmetric matrix satisfying
\begin{multline}
\hat\Omega_tP_t^{-1}+P_t^{-1}\hat\Omega_t=A^{\top}-A\\
+\tfrac{1}{2}\Big(P_t^{-1}BB^{\top}-BB^{\top}P_t^{-1}\\
+P_tC^{\top}C-C^{\top}CP_t\Big).
\label{OTOmega}
\end{multline}
The OTdetFPF replaces the Brownian motion term of the sFPF by a repulsive term that drives the particles away from their mean. 
In addition, it has a skew-symmetric term that arises from the minimization of the transportation cost.
Since the skew-symmetric term does not affect the distribution, a parametrized family of deterministic FPFs (detFPF) may be constructed by choosing it arbitrarily, in particular by setting it to zero.

In another paper \cite{Kim2018}, an approach based on optimal control theory and duality formalisms is used to extend the linear FPF to a two-parameter family of FPFs.
In the next section, all of the previously mentioned transformations will be captured by a common parametrization, see in particular Section~\ref{speccase}.

\subsection{Notations and definitions}
We denote by $S$ the process $(S_t)_{t\geq 0}$ as a random variable with values in a suitable subspace of functions $\R\to\R^n$ (for most cases, the space of continuous functions will suffice), with its law given by a probability measure on the corresponding Borel $\sigma$-algebra.
If the conditional distribution of $S_t$ given $\mathscr{F}^Y_t$ agrees with the conditional distribution of $X_t$ given $\mathscr{F}^Y_t$ for all $t\geq 0$, such a process is called \emph{particle filter}\footnote{We only consider exact particle filters here. The idea is that fore finite $N$ the mean-field quantities appearing in the control law are replaced by empirical estimates, yielding an asymptotically exact filter (although the details are nontrivial, see e.g. \cite{Mandel2011})}.
If $S$ is adapted to $\mathscr{F}^{Y,Z}_t$, where $Z$ consists of $r\geq 0$ Brownian motions independent of $Y$ and if $S$ is the solution to an SDE, it will be called \emph{feedback particle filter}.
If $r=0$ it is called \emph{deterministic feedback particle filter}.
We denote by $\Skew{n}$ the real vector-space of skew-symmetric $n\times n$-matrices, i.e. of matrices $X$ such that $X^{\top}=-X$, where $\cdot^{\top}$ denotes transposition.
The notation $\text{GL}(n)$ is used for the multiplicative group of invertible $n\times n$-matrices.

\section{The class of linear particle filters and their symmetries}
\label{symmetries}
In the linear-Gaussian case, a particle filter is exact if the conditional distribution of $S_t$ given $\mathscr{F}^Y_t$ is multivariate Gaussian with mean equal to $\mu_t$ and covariance matrix equal to $P_t$ (the mean and covariance matrix of the Kalman-Bucy filter).
If we split off the mean and define $E_t:=S_t-\mu_t$, the process $E_t$ can be any Gaussian process with conditional mean equal to zero and conditional covariance function $k(t,t')$ such that $k(t,t)=P_t$ for all $t\geq 0$. 
In this section, we first look at a group of linear time-dependent transformations that preserve this structure, called \emph{gauge transformations}.
Then we consider the subclass $\mathfrak{F}$ of filters for which $E_t$ is the solution to an SDE, which we will call the class of \emph{linear feedback particle filters}, and the subclass $\mathfrak{F}_{\text{det}}\subset \mathfrak{F}$ for which $E_t$ does not have an independent source of noise, called \emph{deterministic linear feedback particle filters}.
We then describe the action of the group on $\mathfrak{F}$, proving that the group of gauge transformations acts transitively on $\mathfrak{F}_{\text{det}}$.

\subsection{Gauge freedom of linear particle filters}
From any given particle filter $S_t=\mu_t+E_t$, another particle filter $\tilde S_t=\mu_t+\tilde E_t$ can be constructed by applying a linear transformation $\tilde E_t=g_tE_t$, where $g_t\in\text{GL}(n)$ is adapted to $\mathscr{F}^Y_t$.
By construction, the conditional means of $\tilde S_t$ and $S_t$ agree, and if in addition $g_t$ satifies
\begin{equation}
g_tP_tg_t^{\top}=P_t, \quad t\geq 0, 
\label{group}
\end{equation}
the conditional variances of $\tilde S_t$ and $S_t$ also agree.

For fixed $t$, the subset consisting of those $g\in\text{GL}(n)$ that satisfy Eq.~\eqref{group} forms a (random and time-dependent) Lie subgroup of $\text{GL}(n)$, which we denote by $\mathcal{G}_t$. 
Its Lie algebra $\mathfrak{g}_t$ consists of all matrices of the form $\Omega P_t^{-1}$, where $\Omega\in\Skew{n}$ is arbitrary.

In principle, the choice of the function $g:[0,\infty)\to GL(n)$, $t\mapsto g_t$ does not have to be regular.
For example, after simulating a sample of particles up to time $t$, a sample for $s\leq t$ can be modified by a transformation $g\in\mathcal{G}_s$ without concern for samples at other times as long as only information up to time $s$ is used.
However, in the following we will restrict to choices of $g$ with more regularity.

For example, $g$ may be chosen to be a continuously differentiable function governed by an ordinary differential equation (ODE) $\dot g_t=M_tP_t^{-1}g_t$.
This parametrization is convenient because Eq.~\eqref{group} implies the constraint $M_t=\tfrac{1}{2}(\dot P_t-g_t\dot P_tg_t^{\top})+\Upsilon^{(0)}_t$, where $\Upsilon^{(0)}$ is a continuous but otherwise arbitrary function with values in $\Skew{n}$.
The skew-symmetric component of $M_t$ accounts for motion of $g_t$ along the group $\mathcal{G}_t$, whereas the symmetric component is due to the change in $\mathcal{G}_t$ itself. 

More generally, $g_t$ may be given by the solution of an SDE involving the observations, i.e.
\begin{equation}
dg_t=\parenths{M_tdt+\sum_{i=1}^m\Upsilon^{(i)}_tdY^{(i)}_t}P_t^{-1}g_t,
\label{gSDE}
\end{equation}
where $Y^{(i)}_t$ denotes the component number $i$ of $Y_t$.
By differentiating Eq.~\eqref{group} and matching terms, we find that $\Upsilon^{(i)}_t\in\Skew{n}$, $i=1,...,m$ can be chosen arbitrarily, whereas $M_t$ has to satisfy the constraint
\begin{equation}
M_t=\frac{1}{2}\parenths{\dot P_t-g_t\dot P_tg_t^{\top}-\sum_{i=1}^m\Upsilon^{(i)}_tP^{-1}_t(\Upsilon^{(i)}_t)^{\top}}+\Upsilon^{(0)}_t,
\label{Mconstr}
\end{equation}
where $\Upsilon^{(0)}_t\in\Skew{n}$ is again arbitrary.
We call any $g$ of the above form \eqref{gSDE}-\eqref{Mconstr} a \emph{deterministic gauge transformation}.

\subsection{A general class of linear feedback particle filters}
Suppose that $S_t=\mu_t+E_t$, where the dynamics of $E_t$ are given by
\begin{equation}
dE_t=\parenths{G_tdt+\sum_{i=1}^m\Omega^{(i)}_tdY^{(i)}_t}P_t^{-1}E_t+\sum_{j=1}^rH^{(j)}_td\bar W^{(j)}_t,
\label{Edyn}
\end{equation}
where $r\in\{0,1,2,...\}$ (for $r=0$ the sum over $j$ is zero by convention), $\bar W^{(j)}_t$ are scalar Brownian motions independent of each other and of $\mathscr{F}^{X,Y}_t$, and $E_0$ has zero mean and covariance matrix equal to $P_0$.
In addition, all coefficients are assumed to be adapted to $\mathscr{F}^Y_t$ and bounded.
By construction, $S_t$ has conditional mean equal to $\mu_t$.
Moreover, $S_t$ has conditional covariance equal to $P_t$ for all $t\geq 0$ if and only if $\Omega^{(i)}_t\in\Skew{n} $, $i=1,...,m$, and $G_t$ satisfies the constraint
\begin{multline}
G_t=\frac{1}{2}\Bigg(\dot P_t+\sum_{i=1}^m\Omega^{(i)}_tP_t^{-1}\Omega^{(i)}_t
\\
-\sum_{j=1}^rH^{(j)}_t(H^{(j)}_t)^{\top}\Bigg)+\Omega^{(0)}_t,
\label{G0}
\end{multline}
where $\Omega^{(0)}_t\in \Skew{n}$ is arbitrary.

\begin{definition}
The class of all processes $S_t=\mu_t+E_t$ with $E_t$ having dynamics according to Eqs.~\eqref{Edyn}-\eqref{G0} where $\Omega^{(i)}_t$, $i=0,1,...,m$ are $\mathscr{F}^Y_t$-adapted $\Skew{n}$-valued processes, is denoted by $\mathfrak{F}$ and called the class of \emph{linear feedback particle filters}.
The subclass $\mathfrak{F}_{\text{det}}\subset \mathfrak{F}$ for which $r=0$ is called \emph{deterministic linear feedback particle filters}.
\end{definition}

\subsection{Special cases of linear feedback particle filters}
\label{speccase}
First, it is worth pointing out that the filters obtained by adding linear divergence-free vector fields to $K$ and $u$ (as described in the beginning of Section~\ref{nonunique}) belong to $\mathscr{F}$, as they can be related to the above parametrization by setting $\Omega^{(i)}_t=P_t\Pi^{(i)}_tP_t$ and $\Omega^{(0)}_t=\tfrac{1}{2}AP_t-\tfrac{1}{2}P_tA-P_t\tilde\Pi_tP_t-\tfrac{1}{2}\sum_{i=1}^mC^{i.}\mu_t P_t\Pi^{(i)}_tP_t$.

By adding the mean dynamics ($d\mu_t$) to $dS_t$ from Eqs.~\eqref{Edyn}-\eqref{G0}, we obtain the dynamics of particle filters in $\mathfrak{F}$:
\begin{multline}
dS_t=AS_tdt+P_tC^{\top}\parenths{dY_t-\tfrac{1}{2}C(S_t+\mu_t)dt}\\
+\tilde\Omega_tP_t^{-1}(S_t-\mu_t)dt+\frac{1}{2}BB^{\top}P_t^{-1}(S_t-\mu_t)dt\\
+\sum_{j=1}^rH^{(j)}_td\bar W^{(j)}_t-\frac{1}{2}\sum_{j=1}^rH^{(j)}_t(H^{(j)}_t)^{\top}P_t^{-1}(S_t-\mu_t)dt,
\label{generaldS}
\end{multline}
where $\tilde\Omega_t=\tfrac{1}{2}\Omega^{(0)}_t-\tfrac{1}{2}(AP_t-P_tA^{\top})+\tfrac{1}{2}\sum_{i=1}^m\Omega^{(i)}_tP_t^{-1}\Omega^{(i)}_t$.

Note that although $\tilde\Omega_t$ is comprised of several terms, the presence of the arbitrary skew-symmetric matrix $\Omega^{(0)}_t$ means that there are no constraints on its time evolution.
We may thus split it as $\tilde\Omega_t=\hat\Omega_t+(\tilde\Omega_t-\hat\Omega_t)$, where $\hat\Omega_t$ is given by the solution to Eq.~\eqref{OTOmega}.
Thus, for $r=0$ (or $r>0$ and $H^{(j)}_t=0$, $j=1,...,r$) and $\tilde\Omega_t-\hat\Omega_t=0$, Eq.~\eqref{generaldS} takes the form of the OTdetFPF from Eq.~\eqref{OTdetFPF}. 
If we set $r=n'$, $H^{(j)}_t=j\text{'th column of }B$, and $\tilde\Omega_t=0$, the last term in the second line of \eqref{generaldS} cancels with the last term in the third line, and we obtain the slFPF from Eq.~\eqref{slFPF}.
As a last example, if $r=n'+m$, $H^{(j)}_t=\gamma_1\cdot j\text{'th column of }B$ for $j=1,...,n'$, and $H^{(n'+j)}_t=\gamma_2\cdot j\text{'th column of }P_tC^{\top}$ for $j=1,...,m$, we obtain the two-parameter family with parameters $\gamma_1,\gamma_2$ from \cite{Kim2018}.

\subsection{Action of gauge transformations on the class of linear feedback particle filters}
The following two results summarize the details of how gauge transformations act upon the classes $\mathfrak{F}$ and its subclasses defined above.
\begin{proposition}
Let $S\in\mathfrak{F}$ and $\tilde S_t=\mu_t+\tilde E_t$, where $\tilde E_t=g_tE_t$ with $g$ a deterministic gauge transformation according to Eqs.~\eqref{gSDE}-\eqref{Mconstr}.
Then $\tilde S$ also belongs to $\mathfrak{F}$ with $\tilde \Omega^{(i)}_t$, $i=0,1,...,m$ and $\tilde H^{(j)}_t$, given by
\begin{align}
\label{Omegatilde0}\tilde \Omega^{(0)}_t&=g_t\Omega_tg_t^{\top}+\Upsilon^{(0)}_t\nonumber \\
&\quad +\frac{1}{2}\sum_{i=1}^m\parenths{\Upsilon^{(i)}_tP_t^{-1}\tilde \Omega^{(i)}_t-\tilde \Omega^{(i)}_tP_t^{-1}\Upsilon^{(i)}_t},\\
\label{Omegatildei}\tilde \Omega^{(i)}_t&=g_t\Omega^{(i)}_tg_t^{\top}+\Upsilon^{(i)}_t, \quad i=1,..,m, \\
\label{Htildej}\tilde H^{(j)}_t&=g_tH^{(j)}_t, \quad j=1,...,r.
\end{align}
\end{proposition}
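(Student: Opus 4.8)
The plan is to differentiate $\tilde E_t = g_t E_t$ with It\^o's product rule, rewrite the result in the canonical form \eqref{Edyn}, and read off the new coefficients by matching the $dt$-, $dY^{(i)}$- and $d\bar W^{(j)}$-parts; then one verifies that these coefficients meet every requirement for membership in $\mathfrak{F}$, the only non-trivial one being the constraint \eqref{G0}.

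First I would expand
\begin{equation*}
d\tilde E_t = (dg_t)E_t + g_t(dE_t) + (dg_t)(dE_t).
\end{equation*}
Since the $\bar W^{(j)}$ are independent of $Y$ and $[Y^{(i)},Y^{(k)}]_t=\delta_{ik}t$, the covariation term equals $\sum_{i=1}^m\Upsilon^{(i)}_tP_t^{-1}g_t\Omega^{(i)}_tP_t^{-1}E_t\,dt$. Substituting \eqref{gSDE}, \eqref{Edyn} and $E_t=g_t^{-1}\tilde E_t$, every $E_t$ must be traded for $\tilde E_t$; the tools are the two identities implied by \eqref{group},
\begin{equation*}
g_t^{\top}P_t^{-1}g_t = P_t^{-1}, \qquad P_t^{-1}g_t^{-1} = g_t^{\top}P_t^{-1},
\end{equation*}
which convert $g_t(\cdot)P_t^{-1}g_t^{-1}$ into $g_t(\cdot)g_t^{\top}P_t^{-1}$ and collapse conjugations of the form $g_t^{\top}P_t^{-1}g_t$. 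Matching the $d\bar W^{(j)}$- and $dY^{(i)}$-coefficients then immediately yields $\tilde H^{(j)}_t=g_tH^{(j)}_t$ and $\tilde\Omega^{(i)}_t=g_t\Omega^{(i)}_tg_t^{\top}+\Upsilon^{(i)}_t$ (which are $\mathscr{F}^Y_t$-adapted and lie in $\Skew{n}$ because $\Omega^{(i)}_t$ and $\Upsilon^{(i)}_t$ do), and matching the $dt$-coefficient gives the drift
\begin{equation*}
\tilde G_t = M_t + g_tG_tg_t^{\top} + \sum_{i=1}^m\Upsilon^{(i)}_tP_t^{-1}g_t\Omega^{(i)}_tg_t^{\top}.
\end{equation*}

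It then remains to confirm $\tilde S\in\mathfrak{F}$. The initial condition is correct since $\tilde E_0=g_0E_0$ has zero mean and covariance $g_0P_0g_0^{\top}=P_0$ by \eqref{group}; adaptedness of the new coefficients is inherited, and the needed boundedness holds on compact time intervals (there $g_t$, being a $P_t^{-1}$-isometry, is bounded). The substantive step is that $\tilde G_t$ must satisfy the constraint \eqref{G0} for some \emph{skew-symmetric} $\tilde\Omega^{(0)}_t$. To check this I would insert the constraint \eqref{G0} for $G_t$ and the constraint \eqref{Mconstr} for $M_t$ into the displayed formula for $\tilde G_t$: the $\pm\tfrac{1}{2} g_t\dot P_tg_t^{\top}$ terms cancel, and after applying $g_t^{\top}P_t^{-1}g_t=P_t^{-1}$ to $\sum_ig_t\Omega^{(i)}_tP_t^{-1}\Omega^{(i)}_tg_t^{\top}$ and expanding $\sum_i\tilde\Omega^{(i)}_tP_t^{-1}\tilde\Omega^{(i)}_t$ in terms of $g_t\Omega^{(i)}_tg_t^{\top}$ and $\Upsilon^{(i)}_t$, the right-hand side of \eqref{G0} written in the tilded quantities is recovered; solving for $\tilde\Omega^{(0)}_t$ leaves precisely \eqref{Omegatilde0}, and a short transposition check (using that $P_t^{-1}$ is symmetric and $\Upsilon^{(i)}_t,\tilde\Omega^{(i)}_t$ are skew) shows it lies in $\Skew{n}$.

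The hard part is purely the bookkeeping: keeping the conjugations by $g_t$ and $g_t^{\top}$ straight, and in particular handling the It\^o correction $\sum_i\Upsilon^{(i)}_tP_t^{-1}g_t\Omega^{(i)}_tg_t^{\top}$, which combines with the quadratic term in the $M_t$-constraint \eqref{Mconstr} to generate the asymmetric-looking last sum $\tfrac{1}{2}\sum_i(\Upsilon^{(i)}_tP_t^{-1}\tilde\Omega^{(i)}_t-\tilde\Omega^{(i)}_tP_t^{-1}\Upsilon^{(i)}_t)$ in \eqref{Omegatilde0}. Without the two identities from \eqref{group}, none of the conjugated factors would collapse and no such closed form would appear.
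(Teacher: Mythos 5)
Your proposal is correct and follows essentially the same route as the paper's proof: It\^o's product rule on $\tilde E_t=g_tE_t$, the identity $g_t^{\top}P_t^{-1}g_t=P_t^{-1}$ from Eq.~\eqref{group} to trade $E_t$ for $\tilde E_t$, coefficient matching for \eqref{Omegatildei}--\eqref{Htildej}, and substitution of the constraints \eqref{Mconstr} and \eqref{G0} to extract $\tilde\Omega^{(0)}_t$. If anything, you are more explicit than the paper's writeup in carrying the It\^o correction $\sum_i\Upsilon^{(i)}_tP_t^{-1}g_t\Omega^{(i)}_tg_t^{\top}$ through to the drift, which is indeed the term that produces the final sum $\tfrac{1}{2}\sum_i\bigl(\Upsilon^{(i)}_tP_t^{-1}\tilde\Omega^{(i)}_t-\tilde\Omega^{(i)}_tP_t^{-1}\Upsilon^{(i)}_t\bigr)$ in \eqref{Omegatilde0}.
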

\begin{proof}
We have $d\tilde E_t=dg_tE_t+g_tdE_t+dg_tdE_t$.
By using Eqs.~\eqref{gSDE} and \eqref{Edyn}, we obtain
\begin{align}
dg_tE_t&=\parenths{M_tdt+\sum_{i=1}^m\Upsilon^{(i)}_tdY^{(i)}_t}P_t^{-1}\tilde E_t,\\
g_tdE_t&=g_t\parenths{G_tdt+\sum_{i=1}^m\Omega^{(i)}_tdY^{(i)}_t}P_t^{-1}E_t\nonumber \\
&\quad+\sum_{j=1}^rg_tH^{(j)}_td\bar W^{(j)}_t, \\
dg_tdE_t&=\sum_{i=1}^m\Upsilon^{(i)}_tP_t^{-1}g_t\Omega^{(i)}_tP_t^{-1}E_t dt.
\end{align}
By rewriting $P_t^{-1}E_t=g_t^{\top}g_t^{-\top}P_t^{-1}g_t^{-1}\tilde E_t$, noting that $g_t^{-\top}P_t^{-1}g_t^{-1}=P_t^{-1}$, and then collecting all the terms, we obtain Eqs.~\eqref{Omegatildei} and \eqref{Htildej} as well as $\tilde G_t=g_tG_tg_t^{\top}+M_t$.
By using Eqs.~\eqref{Mconstr} and \eqref{G0}, we obtain
\begin{multline}
g_tG_tg_t^{\top}+M_t=\frac{1}{2}g_t\dot P_tg_t^{\top}+\frac{1}{2}\sum_{i=1}^mg_t\Omega^{(i)}_tP_t^{-1}\Omega^{(i)}_tg_t^{\top}\\
-\frac{1}{2}\sum_{j=1}^rg_tH^{(j)}_t(g_tH^{(j)}_t)^{\top}+g_t\Omega^{(0)}_tg_t^{\top}\\
+\frac{1}{2}\dot P_t-\frac{1}{2}g_t\dot P_tg_t^{\top}-\frac{1}{2}\sum_{i=1}^m\Upsilon^{(i)}_tP^{-1}_t(\Upsilon^{(i)}_t)^{\top}+\Upsilon^{(0)}_t.
\end{multline}
By rewriting $P_t^{-1}=g_t^{\top}P_t^{-1}g_t$ and then using Eq.~\eqref{Omegatildei} to substitute $g_t\Omega^{(i)}_tg_t^{\top}$, after cancelling all terms we obtain
\begin{multline}
g_tG_tg_t^{\top}+M_t=\frac{1}{2}\Big(\dot P_t+\sum_{i=1}^m\tilde\Omega^{(i)}_tP_t^{-1}\tilde\Omega^{(i)}_t\\
-\sum_{j=1}^r\tilde H^{(j)}_t(\tilde H^{(j)}_t)^{\top}\Big)+g_t\Omega^{(0)}_tg_t^{\top}+\Upsilon^{(0)}_t,
\end{multline}
from which Eq.~\eqref{Omegatilde0} follows. 
\end{proof}

\begin{corollary}
The deterministic gauge transformations act transitively on $\mathfrak{F}_{\text{det}}$, i.e. every deterministic gauge transformation of an $S\in\mathfrak{F}_{\text{det}}$ is in $\mathfrak{F}_{\text{det}}$, and for any pair of $S,\tilde S\in\mathfrak{F}_{\text{det}}$ there is a deterministic gauge transformation that maps $S$ to $\tilde S$.
\end{corollary}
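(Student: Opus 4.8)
The plan is to prove the two claims separately. \emph{Invariance} of $\mathfrak{F}_{\text{det}}$ under deterministic gauge transformations is read off from the preceding Proposition specialized to $r=0$: there are then no terms $\tilde H^{(j)}_t$, so the transformed filter is again deterministic, and it only remains to see that the transformed $\tilde\Omega^{(i)}_t$ are skew-symmetric. For $i=1,\dots,m$ this is clear from \eqref{Omegatildei}, since a conjugate $g_t\Omega^{(i)}_tg_t^{\top}$ of a skew matrix is skew and $\Upsilon^{(i)}_t\in\Skew{n}$; for $i=0$ it follows from \eqref{Omegatilde0}, using in addition that $\Upsilon P_t^{-1}\Omega-\Omega P_t^{-1}\Upsilon\in\Skew{n}$ whenever $\Upsilon,\Omega\in\Skew{n}$ and $P_t^{-1}$ is symmetric. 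Hence $\tilde S\in\mathfrak{F}_{\text{det}}$.

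For \emph{transitivity}, fix $S,\tilde S\in\mathfrak{F}_{\text{det}}$ with skew parameters $(\Omega^{(i)})_{i=0}^m$ and $(\tilde\Omega^{(i)})_{i=0}^m$. Since $r=0$, Eq.~\eqref{Edyn} is a homogeneous linear SDE driven only by $Y$, so $E_t=\Phi_tE_0$ where $\Phi_t$ solves $d\Phi_t=\parenths{G_t\,dt+\sum_i\Omega^{(i)}_t\,dY^{(i)}_t}P_t^{-1}\Phi_t$, $\Phi_0=I$; invertibility of $\Phi_t$ is standard for linear SDEs (its inverse solves a companion linear SDE). Since the conditional covariance of $E_t$ is $P_t$ and $E_0$ is independent of $\mathscr{F}^Y$ with covariance $P_0$, one has $\Phi_tP_0\Phi_t^{\top}=P_t$, and likewise $\tilde\Phi_tP_0\tilde\Phi_t^{\top}=P_t$ for $\tilde S$. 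I would then take $g_t:=\tilde\Phi_t\Phi_t^{-1}$. Then $g_tP_tg_t^{\top}=\tilde\Phi_tP_0\tilde\Phi_t^{\top}=P_t$, so $g_t\in\mathcal{G}_t$, and $g_tE_t=\tilde\Phi_tE_0=\tilde E_t$ provided $S$ and $\tilde S$ start from a common $E_0$, which we assume (otherwise $g$ maps $S$ to a filter with the same coefficients and initial distribution, hence the same law, as $\tilde S$).

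The remaining task is to verify that $g_t=\tilde\Phi_t\Phi_t^{-1}$ is a \emph{deterministic gauge transformation}, i.e. satisfies \eqref{gSDE}--\eqref{Mconstr}. Applying It\^o's formula to $\Phi_t^{-1}$ and then to $\tilde\Phi_t\Phi_t^{-1}$, and using $P_t^{-1}g_t^{-1}=g_t^{\top}P_t^{-1}$ (equivalent to $g_t\in\mathcal{G}_t$) to gather each term into the form $(\,\cdot\,)P_t^{-1}g_t$, one finds that $dg_t$ indeed has the structure of \eqref{gSDE}, with $dY^{(i)}_t$-coefficient $\Upsilon^{(i)}_tP_t^{-1}g_t$ where $\Upsilon^{(i)}_t:=\tilde\Omega^{(i)}_t-g_t\Omega^{(i)}_tg_t^{\top}\in\Skew{n}$ (this is \eqref{Omegatildei} rearranged), and with a continuous adapted drift $M_tP_t^{-1}g_t$. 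Since $g_t$ already satisfies \eqref{group} and the $\Upsilon^{(i)}_t$ are skew, the equivalence behind \eqref{Mconstr} --- for a $g$ of the form \eqref{gSDE}, condition \eqref{group} holds precisely when the $\Upsilon^{(i)}_t$ are skew and $M_t$ is given by \eqref{Mconstr} --- forces $M_t$ to equal the right-hand side of \eqref{Mconstr}; thus $g$ is a deterministic gauge transformation and, by construction (or by the Proposition), maps $S$ to $\tilde S$. I expect the work to concentrate entirely in this last step --- the It\^o computation for $d(\tilde\Phi_t\Phi_t^{-1})$ and the bookkeeping that shows its coefficients are skew, adapted, and locally bounded. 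Boundedness is automatic: conjugating by a square root of $P_t$ identifies $\mathcal{G}_t$ with $O(n)$, so $g_t$ and $g_t^{-1}$ remain in a set bounded uniformly on each finite time interval, which also rules out explosion of the SDE.
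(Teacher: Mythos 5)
Your proof is correct. Note that the paper itself offers no proof of this corollary --- it is stated as an immediate consequence of the preceding Proposition --- so the comparison is really between your argument and what the authors presumably had in mind. For the invariance half you do exactly what the Proposition licenses: set $r=0$, observe there are no $\tilde H^{(j)}$ terms, and check skew-symmetry of the $\tilde\Omega^{(i)}_t$ from \eqref{Omegatilde0}--\eqref{Omegatildei}; this matches the intended reading. The transitivity half is the nontrivial part that the paper glosses over, and your construction $g_t=\tilde\Phi_t\Phi_t^{-1}$ from the fundamental solutions of the two homogeneous linear SDEs is sound: the identities $\Phi_tP_0\Phi_t^{\top}=P_t=\tilde\Phi_tP_0\tilde\Phi_t^{\top}$ give \eqref{group}, the It\^o computation does produce a $dY^{(i)}$-coefficient of the form $\bigl(\tilde\Omega^{(i)}_t-g_t\Omega^{(i)}_tg_t^{\top}\bigr)P_t^{-1}g_t$ after using $g_t^{-\top}P_t^{-1}=P_t^{-1}g_t$, and the constraint \eqref{Mconstr} on $M_t$ is then forced by the differentiated form of \eqref{group}, exactly as the paper derives it. A slightly more direct route, closer in spirit to the Proposition, is to prescribe $\Upsilon^{(i)}_t:=\tilde\Omega^{(i)}_t-g_t\Omega^{(i)}_tg_t^{\top}$ and solve \eqref{Omegatilde0} for $\Upsilon^{(0)}_t$, then define $g$ as the solution of the resulting SDE \eqref{gSDE}--\eqref{Mconstr} with $g_0=I$ and invoke the Proposition; the price is that this SDE has coefficients depending on $g_t$ itself, so one must argue existence and non-explosion, which your explicit construction (together with the compactness observation that $P_t^{-1/2}\mathcal{G}_tP_t^{1/2}=O(n)$) sidesteps cleanly. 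Your remark about coupling $E_0=\tilde E_0$, or equivalently interpreting ``maps $S$ to $\tilde S$'' at the level of coefficient data and conditional laws, is the right way to handle the only genuine ambiguity in the statement.
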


\section{Optimality criteria}
\label{optimality}

We will now return to the multidimensional case.
In order to select a particle filter among all linear feedback particle filters, additional criteria are required.
Since every filter $S\in\mathfrak{F}$ is specified by a choice of $\Omega^{(i)}_t\in\Skew{n}$ for $i=0,1,...,m$ and a vector $H^{(j)}_t\in\R^n$ for $j=1,...,r$ and for all $t$, an optimality criterion can be formulated as a function of these quantities.
Define 
\begin{equation}
\mathscr{L}_{P,r}(\Omega^{(0)},...,\Omega^{(m)},H^{(1)},...,H^{(r)})=\text{tr}\, GP^{-1}G^{\top},
\label{costfunction}
\end{equation}
where $\text{tr}$ denotes the trace operator and $G$ is given by Eq.~\eqref{G0}. 
This term appears to quadratic order in $dt$ when expanding $\text{tr}\,\mathds{E}[dE_tdE_t^{\top}|\mathscr{F}^Y_t]$ and can be associated with the transport cost as in \cite{Taghvaei2016}.
It is therefore unsurprising that its minimization (under the constraint that the observation and Brownian motion terms are absent) yields the OTdetFPF from \cite{Taghvaei2016}.

\begin{proposition}
The filter given by $\Omega^{(i)}_t=0$, $i=1,..,m$, $H^{(j)}=0$, $j=1,...,r$, and
\begin{equation}
\Omega^{(0)}_t=\Omega^{\ast}_t=\argmin_{\Omega\in\Skew{n}}\mathscr{L}_{P_t,0}(\Omega,0,...,0)
\end{equation}
is identical to the optimal transport deterministic linear feedback particle filter (OTdetFPF).
\label{PropOT}
\end{proposition}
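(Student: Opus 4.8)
The plan is to restrict the cost \eqref{costfunction} to the deterministic one-parameter subfamily $\Omega^{(i)}_t=0$ ($i=1,\dots,m$), $H^{(j)}_t=0$ ($j=1,\dots,r$), to solve the resulting unconstrained convex quadratic minimization over $\Skew{n}$, and then to identify the minimizing dynamics with \eqref{OTdetFPF} by matching the optimality equation against \eqref{OTOmega}. Under this restriction, \eqref{G0} collapses to $G_t=\tfrac12\dot P_t+\Omega^{(0)}_t$; writing $D:=\tfrac12\dot P_t$ --- symmetric, since $\dot P_t=BB^{\top}+AP_t+P_tA^{\top}-P_tC^{\top}CP_t$ is --- and $\Omega:=\Omega^{(0)}_t\in\Skew{n}$, the objective becomes
\[
\mathscr{L}_{P_t,0}(\Omega,0,\dots,0)=\text{tr}\big((D+\Omega)P_t^{-1}(D-\Omega)\big)=\text{tr}(DP_t^{-1}D)-2\,\text{tr}(DP_t^{-1}\Omega)-\text{tr}(\Omega P_t^{-1}\Omega).
\]
Writing $P_t^{-1}=LL^{\top}$ one has $-\text{tr}(\Omega P_t^{-1}\Omega)=\norm{\Omega L}_F^2$, so $\mathscr{L}_{P_t,0}(\cdot,0,\dots,0)$ is a strictly convex, coercive quadratic on the finite-dimensional space $\Skew{n}$ and admits a unique minimizer $\Omega^{\ast}_t$.

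Next I would compute the directional derivative of $\mathscr{L}_{P_t,0}$ along an arbitrary $H\in\Skew{n}$, which (using symmetry of $D$ and skew-symmetry of $H$ and $\Omega$) equals $-2\,\text{tr}\big(H(DP_t^{-1}+P_t^{-1}\Omega)\big)$. Since this must vanish for all such $H$, the minimizer $\Omega^{\ast}_t$ is characterized by $DP_t^{-1}+P_t^{-1}\Omega^{\ast}_t$ being symmetric, i.e. by the Lyapunov-type equation
\[
\Omega^{\ast}_t P_t^{-1}+P_t^{-1}\Omega^{\ast}_t=\tfrac12\big(P_t^{-1}\dot P_t-\dot P_t P_t^{-1}\big).
\]

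Now I would insert $\Omega^{(0)}_t=\Omega^{\ast}_t$ (and the remaining data zero) into the general dynamics \eqref{generaldS}: all stochastic terms vanish, and comparing the resulting drift with \eqref{OTdetFPF}, the claim reduces to the single identity $\tilde\Omega_t=\hat\Omega_t$ between the skew-symmetric coefficients of $P_t^{-1}(S_t-\mu_t)\,dt$. The key tool is that $\mathcal{A}_{P_t}\colon\Omega\mapsto\Omega P_t^{-1}+P_t^{-1}\Omega$ is a bijection of $\Skew{n}$: if $\mathcal{A}_{P_t}(\Omega)=0$ then $P_t\Omega+\Omega P_t=0$, which has only the trivial solution because $P_t$ is positive definite, so \eqref{OTOmega} has a unique solution $\hat\Omega_t$. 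Expanding $\tilde\Omega_t$ from \eqref{generaldS} in terms of $\Omega^{\ast}_t$ and $AP_t-P_tA^{\top}$, applying $\mathcal{A}_{P_t}$, and then substituting the optimality equation above together with the Riccati identity for $\dot P_t$, I would verify that $\mathcal{A}_{P_t}(\tilde\Omega_t)$ coincides with the right-hand side of \eqref{OTOmega}; by injectivity, $\tilde\Omega_t=\hat\Omega_t$, which proves the proposition.

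The step I expect to be the main obstacle is this final verification. After applying $\mathcal{A}_{P_t}$ and expanding via the Riccati equation, one is left with the terms $P_t^{-1}BB^{\top}$, $BB^{\top}P_t^{-1}$, $A$, $A^{\top}$, $P_t^{-1}AP_t$, $P_tA^{\top}P_t^{-1}$, $P_tC^{\top}C$, and $C^{\top}CP_t$, and one must check that the conjugated terms $P_t^{-1}AP_t$ and $P_tA^{\top}P_t^{-1}$ cancel and that what remains collapses onto the comparatively simple right-hand side of \eqref{OTOmega}. This is purely mechanical but is the only genuinely computational point; the uniqueness claim for \eqref{OTOmega} used above is the sole additional ingredient and is routine.
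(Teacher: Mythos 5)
Your proposal is correct and follows essentially the same route as the paper: the first-order condition over $\Skew{n}$ yields the identical Lyapunov-type equation $\Omega P_t^{-1}+P_t^{-1}\Omega=\tfrac12\parenths{P_t^{-1}\dot P_t-\dot P_tP_t^{-1}}$, and the identification with the OTdetFPF reduces in both cases to checking, via uniqueness of solutions of this equation on $\Skew{n}$, that the minimizer equals $\tfrac12(AP_t-P_tA^{\top})+\hat\Omega_t$, the skew parameter read off from Eq.~\eqref{OTdetFPF}. Your strict-convexity and coercivity argument is a small genuine strengthening (the paper only exhibits the critical point, so your argument is what actually justifies the $\argmin$ in the statement); one practical caution is that the coefficient of $\Omega^{(0)}_t$ in the paper's expression for $\tilde\Omega_t$ following Eq.~\eqref{generaldS} carries a spurious factor of $\tfrac12$ (the correct relation with $\Omega^{(i)}_t=0$ is $\tilde\Omega_t=\Omega^{(0)}_t-\tfrac12(AP_t-P_tA^{\top})$), so in your final verification you should derive that relation yourself rather than quoting it, or else the matching against Eq.~\eqref{OTOmega} will not close.
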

\begin{proof}
The OTdetFPF has dynamics\footnote{The reference \cite{Taghvaei2016} assumed $B=\mathds{1}$. 
For this reason, a few additional terms involving $B$ appear here.}
\begin{equation}
dE_t=\parenths{A+\tfrac{1}{2}BB^T-\tfrac{1}{2}P_tC^{\top}CP_t+\hat\Omega_t}P_t^{-1}E_tdt,
\end{equation}
where $\hat\Omega_t$ is the unique solution of Eq.~\eqref{OTOmega}.
This corresponds to Eq.~\eqref{Edyn} with $\Omega^{(i)}_t=0$, $i=1,..,m$, $H^{(j)}=0$, $j=1,...,r$, and
\begin{equation}
\Omega^{(0)}_t=\tfrac{1}{2}\parenths{AP_t-P_tA^{\top}+\hat\Omega_t}.
\end{equation}
Any critical point of the function $\mathscr{L}_{P,0}(\Omega,0,...,0)$ satisfies 
\begin{equation}
\text{tr}\brackets{X\parenths{P^{-1}G^{\top}-GP^{-1}}}=0, \quad\forall X\in\Skew{n}.
\end{equation}
Since $(X,Y)\mapsto\text{tr}\,XY$ defines an inner product on $\Skew{n}$, this implies $P^{-1}G^{\top}-GP^{-1}=0$. 
Substituting $G=\tfrac{1}{2}\dot P+\Omega$, we obtain the equation
\begin{multline}
\Omega P^{-1}+P^{-1}\Omega=\tfrac{1}{2}\parenths{P^{-1}\dot P-\dot PP^{-1}}\\
=\tfrac{1}{2}\Big(P^{-1}BB^{\top}-BB^{\top}P^{-1}+P^{-1}AP+A^{\top}\\
-A-PA^{\top}P^{-1}+PC^{\top}C-C^{\top}CP\Big).
\end{multline}
It can be checked that the unique solution is given by $\Omega=\tfrac{1}{2}(AP-PA^{\top}+\hat\Omega)$.
\end{proof}
Whereas the optimal transport formulation in \cite{Taghvaei2016} always yields deterministic filters, the cost function \eqref{costfunction} may be minimized without the constraints of Proposition~\ref{PropOT}.
This yields a new class of filters that has not yet explicitly appeared in the literature.

\begin{proposition}
Let $S$ be a particle filter specified by data $\mathscr{D}_t=(\Omega^{(0)}_t,\Omega^{(1)}_t,...,\Omega^{(m)}_t,H^{(1)}_t,...,H^{(r)}_t)$ consisting of $\Omega^{(i)}_t\in\Skew{n}$ for $i=0,1,...,m$ and a vector $H^{(j)}_t\in\R^n$ for $j=1,...,r$ for some $r\geq 0$.
The following properties are equivalent:
\begin{enumerate}[i)]
\item $\mathscr{L}_{P_t,r}$ has a critical point at $\mathscr{D}_t$,
\item $\mathscr{L}_{P_t,r}$ has a global minimum at $\mathscr{D}_t$,
\item $\mathscr{D}_t$ is given by $\Omega^{(0)}_t=0$ and 
\begin{equation}
\sum_{j=1}^rH^{(j)}_t(H^{(j)}_t)^{\top}-\sum_{i=1}^m\Omega^{(i)}_tP_t^{-1}\Omega^{(i)}_t=\dot P_t.
\end{equation}
\end{enumerate}
\label{PropOTwoC}
\end{proposition}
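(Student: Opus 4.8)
The plan is to reduce the three‑way equivalence to the elementary fact that $\mathscr{L}_{P_t,r}$ is a nonnegative quadratic expression vanishing precisely when $G_t=0$, and then to show that the Euler–Lagrange equations at any stationary point already force $G_t=0$.

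First I would reformulate (iii). Since $P_t$ is positive definite, writing $P_t^{-1}=R^{\top}R$ with $R$ invertible gives $\mathscr{L}_{P_t,r}=\text{tr}\,G_tP_t^{-1}G_t^{\top}=\|G_tR^{\top}\|^2\geq 0$, with equality if and only if $G_t=0$. By Eq.~\eqref{G0}, $G_t=Q_t+\Omega^{(0)}_t$ where $Q_t:=\tfrac12\big(\dot P_t+\sum_{i=1}^m\Omega^{(i)}_tP_t^{-1}\Omega^{(i)}_t-\sum_{j=1}^rH^{(j)}_t(H^{(j)}_t)^{\top}\big)$ is symmetric and $\Omega^{(0)}_t$ is skew‑symmetric; by uniqueness of the symmetric/skew‑symmetric decomposition, $G_t=0$ is equivalent to $\Omega^{(0)}_t=0$ together with $Q_t=0$, which is exactly (iii). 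Thus (iii) $\Leftrightarrow$ $G_t=0$ $\Leftrightarrow$ $\mathscr{L}_{P_t,r}(\mathscr{D}_t)=0=\min\mathscr{L}_{P_t,r}$, which gives (iii) $\Rightarrow$ (ii); and (ii) $\Rightarrow$ (i) is immediate because $\mathscr{L}_{P_t,r}$ is smooth on the vector space $\Skew{n}^{m+1}\times(\R^n)^r$.

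The content of the proof is (i) $\Rightarrow$ (iii), for which I would compute the first‑order conditions. From $\delta\mathscr{L}_{P_t,r}=2\,\text{tr}(\delta G_t\,P_t^{-1}G_t^{\top})$, the variations of $G_t$ induced by perturbing each datum (namely $\delta G_t=\delta\Omega^{(0)}_t$; $\delta G_t=-\tfrac12(\delta H^{(j)}_t(H^{(j)}_t)^{\top}+H^{(j)}_t(\delta H^{(j)}_t)^{\top})$; $\delta G_t=\tfrac12(\delta\Omega^{(i)}_tP_t^{-1}\Omega^{(i)}_t+\Omega^{(i)}_tP_t^{-1}\delta\Omega^{(i)}_t)$), and the fact that $(X,Y)\mapsto\text{tr}\,XY$ restricts to an inner product on $\Skew{n}$, one reads off the stationarity conditions: $P_t^{-1}G_t^{\top}=G_tP_t^{-1}$ (from the $\Omega^{(0)}$‑variation); $G_tP_t^{-1}H^{(j)}_t=0$ (from the $H^{(j)}$‑variations, after using the previous identity); and $\Omega^{(i)}_tG_t+G_t^{\top}\Omega^{(i)}_t=0$ (from the $\Omega^{(i)}$‑variations, after substituting $P_t^{-1}G_t^{\top}=G_tP_t^{-1}$ and noting that the resulting matrix is already skew‑symmetric, hence forced to vanish). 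Setting $\Sigma_t:=G_tP_t^{-1}$, which is symmetric by the first condition, Eq.~\eqref{G0} becomes $\Sigma_tP_t+P_t\Sigma_t=\dot P_t+\sum_i\Omega^{(i)}_tP_t^{-1}\Omega^{(i)}_t-\sum_jH^{(j)}_t(H^{(j)}_t)^{\top}$; pairing this with $\Sigma_t$, the $H^{(j)}$ terms drop out as $(H^{(j)}_t)^{\top}\Sigma_tH^{(j)}_t=0$ and each $\Omega^{(i)}$ term drops out as $\text{tr}(\Sigma_t\Omega^{(i)}_tP_t^{-1}\Omega^{(i)}_t)=0$ (a one‑line consequence of $\Omega^{(i)}_tG_t+G_t^{\top}\Omega^{(i)}_t=0$ and cyclicity of the trace), leaving $2\,\mathscr{L}_{P_t,r}(\mathscr{D}_t)=2\,\text{tr}(\Sigma_tP_t\Sigma_t)=\text{tr}(\Sigma_t\dot P_t)$; it then remains to conclude $\Sigma_t=0$, which by the first paragraph is (iii).

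The step I expect to be the main obstacle is precisely this last deduction: excluding stationary points at which $\mathscr{L}_{P_t,r}$ is critical but $G_t\neq0$. Knowing only that $\Sigma_t$ is symmetric, that $\Sigma_tH^{(j)}_t=0$ and $\Omega^{(i)}_tG_t+G_t^{\top}\Omega^{(i)}_t=0$, and that $2\mathscr{L}_{P_t,r}(\mathscr{D}_t)=\text{tr}(\Sigma_t\dot P_t)$, seems not to suffice by itself; I anticipate that one must bring in the explicit Riccati form $\dot P_t=BB^{\top}+AP_t+P_tA^{\top}-P_tC^{\top}CP_t$ together with positive‑definiteness of $P_t$ to force $\Sigma_t=0$, after which (iii) follows. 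Everything apart from this closing step is routine matrix calculus.
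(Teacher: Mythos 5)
Your reduction of (iii) to $G_t=0$, the chain (iii)$\Rightarrow$(ii)$\Rightarrow$(i), and your first--order conditions are all correct --- and in fact more careful than the paper's own computation. The paper asserts that stationarity is equivalent to the pair $P^{-1}G^{\top}-GP^{-1}=0$ and $P^{-1}G^{\top}+GP^{-1}=0$, but the second identity would only follow if the directions $K^{(i)}P^{-1}\Omega^{(i)}$ and $l^{(j)}(H^{(j)})^{\top}$ swept out enough of the space of test matrices, which they do not when some $\Omega^{(i)}_t$ or $H^{(j)}_t$ vanish (or fail to span). The genuine stationarity conditions are exactly the ones you derive: $G_tP_t^{-1}$ symmetric, $G_tP_t^{-1}H^{(j)}_t=0$, and $\Omega^{(i)}_tG_t+G_t^{\top}\Omega^{(i)}_t=0$.

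The gap you flag at the end is therefore not a step you failed to find: it is unbridgeable, because (i)$\Rightarrow$(iii) is false as stated. Take $\Omega^{(i)}_t=0$ for $i=1,\dots,m$, $H^{(j)}_t=0$ for $j=1,\dots,r$, and $\Omega^{(0)}_t=\Omega^{\ast}_t$ as in Proposition~\ref{PropOT}. Since $G$ depends on each $\Omega^{(i)}$ and $H^{(j)}$ only quadratically, all derivatives of $\mathscr{L}_{P_t,r}$ in those variables vanish at the origin, and the $\Omega^{(0)}$--derivative vanishes by the choice of $\Omega^{\ast}_t$; so this $\mathscr{D}_t$ is a critical point, yet the symmetric part of its $G_t$ is $\tfrac{1}{2}\dot P_t\neq 0$ away from the Riccati steady state, so (iii) fails. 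For the same reason (ii)$\Rightarrow$(iii) can fail whenever $G_t=0$ is infeasible (e.g.\ $n=1$ with $\dot P_t<0$: the coercive function $\mathscr{L}$ attains its global minimum at $H=0$ with $G_t=\tfrac{1}{2}\dot P_t\neq 0$), a possibility the paper's own Remark essentially concedes. No appeal to the explicit Riccati form will rescue the closing step you were hoping for. What your computation actually proves is the equivalence of (i) with your full list of first--order conditions, and of (iii) with ``$\mathscr{D}_t$ is a global minimizer with value $0$''; recovering the stated three--way equivalence requires an additional nondegeneracy hypothesis that neither you nor the paper supplies.
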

\begin{proof}
By differentiating $\mathscr{L}_{P,r}$, we have i) if and only if 
\begin{multline}
0=\text{tr}\brackets{K^{(0)}\parenths{P^{-1}G^{\top}-GP^{-1}}}\\
+\sum_{i=1}^m\text{tr}\brackets{K^{(i)}P^{-1}\Omega^{i}\parenths{P^{-1}(G)^{\top}+GP^{-1}}}\\
-\sum_{j=1}^m\text{tr}\brackets{l^{(j)}(H^{(j)})^{\top}\parenths{P^{-1}(G)^{\top}+GP^{-1}}}
\end{multline}
for all $K^{(i)}\in\Skew{n}$ and all $l^{(j)}\in\R^n$, $i=0,...,m$ and $j=1,...,r$.
Since $\text{tr}$ is an inner product, this is equivalent to
\begin{align}
P^{-1}(G)^{\top}-GP^{-1}&=0, \\
P^{-1}(G)^{\top}+GP^{-1}&=0,
\end{align}
which in turn is equivalent to $G=0$ and hence to iii).
This shows i)$\Leftrightarrow$iii)
Moreover, $G=0$ when plugged into $\mathscr{L}_{P,r}$ gives a value of $0$, which is the global minimum for this function.
This shows that iii)$\Rightarrow$ii), which concludes the proof.
\end{proof}

\begin{remark}
For $n=1$, since all skew-symmetric $1\times 1$-matrices are zero, condition iii) in Proposition~\ref{PropOTwoC} reads $\sum_{j=1}^r (h^{(j)}_t)^2=b^2+2aP_t-c^2P_t^2$.
This has a solution if and only if $P_0\leq P_t\leq \frac{a+\sqrt{a^2+b^2c^2}}{c^2}$. 
Otherwise there are no global minimizers of $\mathscr{L}_{P_t,r}$.
\end{remark}

\section{Discussion}
\label{discussion}

In this paper, we studied the intrinsic freedom in choosing dynamics of a particle filter if the only requirement for it is that the distribution of $S_t$ match the exact posterior distribution.
We studied this for the example of the linear-Gaussian filtering problem where all calculations can be done explicitly, and the transformations can be assumed to be linear.

As noted in Section~\ref{nonunique}, part of this freedom is clearly present in the nonlinear case in terms of the freedom of modifying any given pair of solutions for $K$ and $u$ by divergence-free vector fields.
However, since the distribution is not known, such divergence-free fields cannot be easily found.
Other forms of freedom, such as the exchange of noise terms for deterministic terms, that are easily accomplished in the linear case, also present more difficulties in the nonlinear case since it is not known, without having detailed knowledge about the distribution, how to compensate a given diffusion term by a deterministic one.

Generally speaking, in the nonlinear case both the gain estimation problem as well as the problem of finding gauge transformations are equally difficult because the distribution is unknown, and cannot be easily reduced to mean and covariance matrix such as in the linear case.
However, the presence of gauge transformations suggests alternative formulation of the gain estimation problem. 
In particular, it raises the question whether the gradient solution for $K$, as it is used in the literature, is the most relevant solution.

The freedom in the choice of the gain can be viewed from yet another angle when considering the filtering problem for a hidden process with values in a smooth manifold $M$.
If a feedback particle filter is to be designed on a manifold, there is no canonical riemannian metric and therefore no preferred gradient solution for $K$. 
Part of the gauge freedom of nonlinear feedback particle filters can therefore be attributed to the choice of a riemannian metric on the manifold.
As explained in Section~\ref{poissonsect}, under certain conditions on the distribution (in the case of compact $M$, for all distributions that can be expressed as a smooth volume form) the choice of a riemannian metric on the state space gives rise to the gradient solution as a minimum-energy solution, which can also be interpreted as the solution to the following dynamical optimal transport problem for the 2-Wasserstein metric induced by the riemannian metric: given a direction in the space of probability distributions, prescribed by the filtering equation, what is the corresponding vector field in the state space such that the incremental transportation cost is minimized?

In future research, the connection between this optimal transport principle (for the gain function) and the optimal transport deterministic feedback particle filter should be further explored.
Also, the broader significance of optimal transport principles and the choice of riemannian metric in nonlinear filtering deserves further study.
In particular, desiderata such as numerical stability should be connected, if possible, to optimal transport principles in order to enable a principled design of feedback particle filters. 
 
\section{Acknowledgements}

We thank Jean-Pascal Pfister, Anna Kutschireiter, Amir-hossein Taghvaei, and Prashant Mehta for helpful discussions, as well as the anonymous referees for their suggestions for improving the manuscript.

\bibliographystyle{IEEEtran}
\bibliography{settings,library}

%

\end{document}